\documentclass[11pt,a4paper]{article}
\usepackage{amsmath}
\usepackage{amssymb}

\usepackage{eufrak}
\usepackage{euscript}

\newtheorem{theorem}{Theorem}[section]
\newtheorem{lemma}[theorem]{Lemma}
\newtheorem{proposition}[theorem]{Proposition}

\textwidth 15.9cm \textheight 22.1cm \evensidemargin 0cm
\oddsidemargin 0cm \topmargin 0cm
\setlength{\parindent}{0pt}

\newcommand{\vb}{\vspace{3mm}}

\newcommand{\OU}{\mbox{\sc ou}}
\newcommand{\ou}{\mathrm{OU}}
\newcommand{\ROU}{\mbox{\sc rou}}
\newcommand{\SDE}{\mbox{\sc sde}}

\newcommand{\DD}{{\rm d}}

\newcommand{\rr}{\mathbb{R}}
\newcommand{\one}{\mathbf{1}}
\newcommand{\pp}{\mathbb{P}}
\newcommand{\dd}{\mathrm{d}}
\newcommand{\ee}{\mathbb{E}}

\newenvironment{proof}[1][\proofname]{\par \normalfont \trivlist
 \item[\hskip\labelsep\itshape #1]\ignorespaces
}{%
 \hspace*{\fill}$\Box$ \endtrivlist
}
\newcommand{\proofname}{{\bf Proof}}


\begin{document}
\title{{A note on the central limit theorem for a \\ one-sided reflected Ornstein-Uhlenbeck process}}

\author{Michel Mandjes \& Peter Spreij}
\maketitle
\begin{abstract} \noindent
In this short communication we present a (functional) central limit theorem for the idle process of a one-sided reflected Ornstein-Uhlenbeck proces.
\vb

\noindent {\it Keywords.} Ornstein-Uhlenbeck processes  $\star$ reflection $\star$  central-limit theorems

\vb

\noindent {\it AMS subject classification.} 60G10, 60J60
\vb

\noindent {\it Affiliations.}
The authors are with Korteweg-de Vries Institute for Mathematics, University of Amsterdam, Science Park 904, 1098 XH  Amsterdam,
the Netherlands; the second author is also with the Radboud University Nijmegen.

\vb

\noindent {\it Email}. $\{$\tt{m.r.h.mandjes,p.j.c.spreij}$\}$\tt{@uva.nl}.
\end{abstract}

\newpage

\section{Introduction}

Ornstein-Uhlenbeck ($\OU$) processes are Markovian, mean reverting Gaussian processes and have found wide-spread use  in a broad range of application domains, such as finance, life sciences, and operations research. In many situations, though, the stochastic process involved is not allowed to cross a certain boundary, or is even supposed to remain within two boundaries. The resulting reflected Ornstein-Uhlenbeck (denoted in the sequel by $\ROU$) processes have been studied by e.g.\ Ward and Glynn \cite{MR1957808,MR1993278,MR2172907}, where  $\ROU$ processes 
are used to approximate the number-in-system processes in M/M/1 and GI/GI/1 queues with reneging under a specific, reasonable scaling. 
Srikant and Whitt \cite{Srikant:1996:SRL:229493.229496} also show that the number-in-system process in a GI/M/$n$ loss model can be approximated by $\ROU$. For other applications, we refer to e.g.\ the introduction of \cite{MR2944002} and references therein.

This note is to be considered as a follow up of, and complementary to, our earlier work~\cite{triple}. That paper considered large deviations results for both one-sided and doubly reflected processes, but only central limit theorems for the `idleness' and `loss' processes in the doubly reflected case. The central limit theorems for one-sided reflected $\OU$ processes are provided in the present note.

\vb
Throughout this note, a probability space $(\Omega, \mathcal{F}, \mathbb{P} )$ equipped with a filtration $\mathbf{F}=\{\mathcal{F}_{t}\}_{t\in \rr_{+}}$ is fixed.
As known, see \cite{LionsSnzitman}, the $\OU$ process is defined
as the unique strong solution to the stochastic differential equation ($\SDE$):
\[
\DD X_{t}=(\alpha-\gamma X_{t})\DD t+\sigma \DD W_{t} , \:\:\:\:X_0=x \in \rr,
\]
where $\alpha\in\rr$, $\gamma, \sigma >0$ and $W_{t}$ is a standard Brownian motion. 

The $\OU$ process is {\it mean-reverting} towards the value $\alpha/\gamma$. To incorporate reflection at a lower boundary $0$, thus constructing $\ROU$, the following $\SDE$ is used, 
\begin{equation}\label{eq:rou}
\DD Y_{t}=(\alpha-\gamma Y_{t})\DD t+\sigma \DD W_{t}+\DD L_{t}, \:\:\:\:Y_{0}=x\geq  0.
\end{equation}
Here $L=\{L_t,t\geq 0\}$ could be interpreted as an `idleness process'. More precisely,  $L$ is defined as the minimal nondecreasing process such that $Y_{t}\geq 0$ for $t\geq 0$; as in the deterministic Skorokhod problem, it holds that $\int_{[0, \infty]}\one_{\{Y_{t}>0\}}\DD L_{t}=0$. Hence for any (continuous) function $g$, one has
\begin{equation}\label{eq:g}
\int_{[0, T]}g(Y_{t})\DD L_{t}=g(0)\int_{[0, T]}\one_{\{Y_{t}=0\}}\DD L_{t}=g(0)L_T, \mbox{ for any $T>0$}.
\end{equation}
Existence of a strong solution to \eqref{eq:rou} has been established in for instance \cite{MR2172907}.

\vb

The  paper mainly focuses on central limit theorems for the idleness process $L$. As in \cite{triple} we use Zhang and Glynn's martingale approach, as developed in \cite{MR2771195} to establish the results. In Section~\ref{SEC5} we review a previous result from \cite{triple} for doubly reflected processes and explain why one has to modify this approach for the one-sided reflected case, whereas in Section~\ref{new} we show which modifications are needed to identify the central limit theorems. We also present results for reflected processes at lower boundaries other than zero, and for processes reflected at an upper bound.

\section{A previous result}\label{SEC5}

Let us briefly summarize the result in \cite{triple}. In that paper the object of study was a doubly reflected (at the lower bound zero, and an upper bound $d
$) $\OU$ process $Z$, satisfying the $\SDE$
\[
\DD Z_t=(\alpha-\gamma Z_t)\DD t+\sigma \DD W_{t}+\DD L_{t}-\DD U_{t}.
\]
For a twice continuously differentiable function $h$  on $\rr$, by It\^{o}'s formula, we have:
\[
\DD h(Z_t)=\big((\alpha-\gamma Z_t)h'(Z_t)+\frac{\sigma^2}{2}h''(Z_t)\big)\DD t+\sigma h'(Z_t)\DD W_{t}+h'(Z_t)\DD L_{t}-h'(Z_t)\DD U_{t}.
\]
Based on the key properties of  $L$ (e.g.\ \eqref{eq:g}) and $U$ (which takes care of the reflection at the upper level $d$), this reduces to 
\begin{align*}
\DD h(Z_t) & =\big((\alpha-\gamma Z_t)h'(Z_t)+\frac{\sigma^2}{2}h''(Z_t)\big)\DD t+\sigma h'(Z_t)\DD W_{t}+h'(0)\DD L_{t}-h'(d)\DD U_{t} \\
& = (\mathcal{L}h)(Y_t)\mathrm{d}t+\sigma h'(Y_t)\DD W_{t}+h'(0)\DD L_{t}-h'(d)\DD U_{t}.
\end{align*}
where the operator ${\mathcal L}$ is defined through
\begin{equation*}
\mathcal{L}:=(\alpha-\gamma x)\frac{\DD}{\DD x}+\frac{\sigma^2}{2}\frac{\DD^2}{\DD x^2}.
\end{equation*}
The following lemma taken from \cite{triple} presented a judicious choice of the function $h$ that was instrumental for the proof of the central limit theorem for the process $U$.
\begin{lemma} \label{lemma7}
Consider the {\sc ode} with real variable right hand side $q\in\rr$
\begin{equation*}
(\mathcal{L}h)=q,\:\:\:\:\:0\leqslant x \leqslant d,
\end{equation*}
under the mixed initial/boundary conditions $h(0)=0,$ $h'(0)=0,$ and $h'(d)=1$.
It has the unique solution $(h,q)\in C^2(\rr)\times\rr$ given by
\[
q  =q_U:=\frac{\sigma^{2}}{2}
\frac{W(d)}{\int_0^d W(v)\DD v}, \:\:\:\:\:
h(x) =\frac{2q_U}{\sigma^{2}}\int_{0}^{x}\int_0^u
\frac{W(v)}{W(u)}\DD v\,\DD u,
\]
where
\begin{equation*}
W(v):=\exp\left(\frac{2\alpha v}{\sigma^2}-\frac{\gamma v^2}{\sigma^2}\right).
\end{equation*}
\end{lemma}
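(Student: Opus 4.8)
The plan is to exploit the fact that the equation, though nominally second order in $h$, is really a first-order linear equation in the derivative $p:=h'$, for which $W$ turns out to be exactly the integrating factor. Writing out $(\mathcal{L}h)=q$ gives $\frac{\sigma^2}{2}p'(x)+(\alpha-\gamma x)p(x)=q$, i.e.
\[
p'(x)+\frac{2(\alpha-\gamma x)}{\sigma^2}\,p(x)=\frac{2q}{\sigma^2}.
\]
The integrating factor is $\exp\!\big(\int_0^x 2(\alpha-\gamma v)/\sigma^2\,\DD v\big)=W(x)$, so the equation becomes $(Wp)'=(2q/\sigma^2)W$. Since $W(0)=1$ and $p(0)=h'(0)=0$, integrating from $0$ to $x$ would yield
\[
h'(x)=p(x)=\frac{2q}{\sigma^2}\,\frac{1}{W(x)}\int_0^x W(v)\,\DD v.
\]

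First I would use this expression with the remaining boundary condition $h'(d)=1$ to pin down $q$: setting $x=d$ gives $1=(2q/\sigma^2)W(d)^{-1}\int_0^d W(v)\,\DD v$, whence $q=q_U$ exactly as stated (well defined because $W>0$ forces $\int_0^d W(v)\,\DD v>0$). Next I would recover $h$ itself by integrating $h'$ from $0$ to $x$ and invoking $h(0)=0$, which reproduces the stated double-integral formula for $h(x)$. Regularity is then immediate: $W$ is the exponential of a polynomial, hence $C^\infty$ and strictly positive on all of $\rr$, so the formulas define a $C^\infty$, in particular $C^2$, function, and $q_U\in\rr$.

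For uniqueness I would argue via the bookkeeping of conditions. Once $q$ is fixed, the reduced first-order linear equation for $p$ together with the initial value $p(0)=0$ has a unique solution, and then $h(0)=0$ fixes the constant of integration, so $h$ is unique. The one point that genuinely deserves care — and the only thing I would flag as a potential obstacle — is that the problem carries three scalar conditions ($h(0)=0$, $h'(0)=0$, $h'(d)=1$) for a second-order equation, which looks over-determined; the resolution is that the free scalar $q$ restores the count, and the condition $h'(d)=1$ is precisely what selects the unique admissible value $q_U$. Beyond this matching of each condition to its integration constant or to $q$, the argument is a clean application of the integrating-factor method and I anticipate no real difficulty.
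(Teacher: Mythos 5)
Your proposal is correct and follows essentially the same route as the paper: the paper quotes this lemma from \cite{triple} without reproving it, but its proof of the companion Lemma~3.1 (\ref{lemma:h}) proceeds exactly as you do, writing the general solution of the reduced first-order equation for $h'$ via the integrating factor $W$ and then fixing the integration constants (and here additionally $q$) from the stated conditions. Your observation that the extra condition $h'(d)=1$ is compensated by the free parameter $q$ is precisely the point the paper itself emphasizes when explaining why this lemma cannot be carried over to the one-sided case.
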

Indeed, with this choice of $h$ we have
\[
\DD h(Z_t)  = \sigma h'(Y_t)\DD W_{t}-(\DD U_{t}-q_U\,\mathrm{d}t).
\]
Boundedness of $h(Z_t)$ ($Z$ is a bounded process) combined with a central limit theorem for the martingale $\int_0^\cdot \sigma h'(Y_t)\DD W_{t}$ was central in the proof of the central limit theorem for $U_t$, see \cite{triple} for further details.  
\medskip\\
In the present paper, we deal with the one-sided reflected process $Y$ and with a twice differential function $h$ one obtains
\begin{equation}\label{HH}
\DD h(Y_t)=(\mathcal{L}h)(Y_t)\mathrm{d}t+\sigma h'(Y_t)\DD W_{t}+h'(0)\DD L_{t}.
\end{equation}
Two facts obstruct a direct application of the method above: (1) the process $h(Y_t)$ is not bounded, and (2) we cannot immediately apply Lemma~\ref{lemma7} to get a proper choice of $h$. Indeed, we needed three initial/boundary conditions to also determine the constant $q$, whereas now, we can only specify $h(0)$ and $h'(0)$. In the next section, we will see how to overcome these difficulties.

\section{The central limit theorems}\label{new}

The main objective of this section is to derive a central limit theorem for $L_t$, for $t\to\infty$, and a functional version of it. We do so relying on the martingale techniques initiated in \cite{MR2771195}. We also consider other reflected processes.

\subsection{Main results}

Dealing with only a one-sided reflected process, we argue that the  procedure as outlined in Section~\ref{SEC5} breaks down. 
In order to remedy this difficulty, we modify the procedure as follows. We need a separate argument that establishes the value of $q_L$ that appears in Theorem~\ref{thm:clt} and Theorem~\ref{thm:fclt} and the following variant of  Lemma~\ref{lemma7}.

\begin{lemma}\label{lemma:h}
Let $q$ be a given constant. Consider the {\sc ode}
\begin{equation*}
(\mathcal{L}h)=q,\, x\geq 0,
\end{equation*}
under the  initial conditions $h(0)=0,$ $h'(0)=1$.
It has the unique solution $h\in C^2(\rr)$ given by
\[
h(x)=\int_{0}^{x}\frac{1}{W(u)}\left(1+
\frac{2q}{\sigma^{2}}\int_0^u
W(v)\DD v\right)\,\DD u,
\]
where
\begin{equation*}
W(v):=\exp\left(\frac{2\alpha v}{\sigma^2}-\frac{\gamma v^2}{\sigma^2}\right).
\end{equation*}
\end{lemma}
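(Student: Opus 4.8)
The plan is to reduce the second-order linear equation to a first-order one by setting $p:=h'$. Written out, the ODE $(\mathcal{L}h)=q$ reads
\[
\frac{\sigma^2}{2}p'(x)+(\alpha-\gamma x)\,p(x)=q,
\]
a first-order linear ODE for $p$. The natural next step is to divide through and multiply by an integrating factor. Computing that factor, one finds
\[
\mu(x)=\exp\Big(\int_0^x \frac{2(\alpha-\gamma u)}{\sigma^2}\,\DD u\Big)=\exp\Big(\frac{2\alpha x}{\sigma^2}-\frac{\gamma x^2}{\sigma^2}\Big)=W(x),
\]
so that $W$ is \emph{precisely} the integrating factor of the equation, which explains its appearance in the stated solution.

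With this observation, $\big(W(x)p(x)\big)'=\tfrac{2q}{\sigma^2}W(x)$. Integrating from $0$ to $x$ and using $W(0)=1$ together with $p(0)=h'(0)=1$ gives
\[
W(x)p(x)=1+\frac{2q}{\sigma^2}\int_0^x W(v)\,\DD v,
\]
and hence $h'(x)=p(x)=\tfrac{1}{W(x)}\big(1+\tfrac{2q}{\sigma^2}\int_0^x W(v)\,\DD v\big)$. A final integration against $\DD u$ from $0$ to $x$, using the remaining condition $h(0)=0$, then yields exactly the claimed expression for $h$.

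For regularity and uniqueness I would argue as follows. Since $W$ is smooth and strictly positive on $\rr$, the integrand defining $h'$ is smooth and the integrals converge for every $x$; consequently $h\in C^2(\rr)$ (in fact $C^\infty$). Uniqueness is immediate from the standard theory of linear ODEs with continuous coefficients: the data $h(0)$ and $h'(0)$ pin down the solution of the second-order equation on all of $\rr$. I do not anticipate a genuine obstacle here. The only point demanding care is the bookkeeping of the two constants $W(0)=1$ and $p(0)=1$, which is exactly what produces the leading ``$1+$'' inside the brackets; mishandling that constant would be the easiest way to misstate the solution.
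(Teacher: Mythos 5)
Your proof is correct and is essentially the paper's argument: the paper simply writes down the general solution $C_2+C_1\int_0^x W(u)^{-1}\,\DD u+\frac{2q}{\sigma^2}\int_0^x\int_0^u\frac{W(v)}{W(u)}\,\DD v\,\DD u$ (which is exactly what your integrating-factor computation produces, with $W$ as the integrating factor) and then fixes $C_1=1$, $C_2=0$ from the initial conditions. You merely carry out explicitly the verification the paper delegates to the earlier reference.
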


\begin{proof} One easily verifies (like in the proof of the corresponding result in \cite{triple}) that the general solution is
\[h(x) =C_{2}+\int_{0}^{x}f(u)\DD u=C_{2}+C_{1}\int_{0}^{x}\frac{1}{W(u)}\DD u+
\frac{2q}{\sigma^{2}}\int_{0}^{x}\int_0^u
\frac{W(v)}{W(u)}\DD v\,\DD u.
\]
Then the initial conditions $h(0)=0, h'(0)=1$ uniquely determine the values of $C_{1}, C_{2}$. Indeed, we get $C_2=0$ and $C_1=1$, and so
\[
h(x)=\int_{0}^{x}\frac{1}{W(u)}\DD u+
\frac{2q}{\sigma^{2}}\int_{0}^{x}\int_0^u
\frac{W(v)}{W(u)}\DD v\,\DD u.
\]
\end{proof}

Next we give the solution as presented in Lemma~\ref{lemma:h} a different appearance. First we need that the fact the invariant distribution of $Y$ is truncated normal, see \cite[Proposition 1]{MR1993278}. If $X$ is the solution to a non-reflected equation, an ordinary $\OU$ process, its invariant distribution is $N(\frac{\alpha}{\gamma},\frac{\sigma^2}{2\gamma})$. Let $\xi$ be a random variable having this distribution and denote its density by $p_{\ou}(x)$. By a simple computation one gets
\[
W(x)=p_{\ou}(x)\exp(\frac{\alpha^2}{\gamma\sigma^2})\sqrt{\pi\sigma^2/\gamma}.
\]
Furthermore, $\pp(\xi>0)=\int_0^\infty p_{\ou}(x)\,\dd x=\Phi(\frac{\alpha}{\sqrt{\sigma^2\gamma/2}})$ with $\Phi$ the cdf of the standard normal distribution. The invariant density $p_Y$ of $Y$ is given by (here and further below $y\geq 0$)
\[
p_Y(y)=\frac{p_{\ou}(y)}{\int_0^\infty p_{\ou}(u)\,\dd u}=\frac{W(y)}{\int_0^\infty W(u)\,\dd u},
\]
or, in explicit terms,
\[
p_Y(y)=\frac{1}{\pp(\xi>0)\sqrt{\pi\sigma^2/\gamma}}\exp\left(-\frac{\gamma}{\sigma^2}(y-\frac{\alpha}{\gamma})^2\right).
\]
Note further that 
\[
p_Y(0)=\frac{p_{\ou}(0)}{\pp(\xi>0)}=\frac{\exp(-\frac{\alpha^2}{\gamma\sigma^2})}{\pp(\xi>0)\sqrt{\pi\sigma^2/\gamma}},
\]
from which it follows that 
\begin{equation}\label{eq:wp}
W(y)=\frac{p_Y(y)}{p_Y(0)}.
\end{equation}

Let $\eta$ be a random variable with density $p$. We proceed by computing $\ee\eta$. 

\begin{lemma}\label{lemma:eeta}
It holds that 
\[
\ee\eta=\int_0^\infty yp_Y(y)\,\dd y = \frac{\sigma^2}{2\gamma}p_Y(0)+\frac{\alpha}{\gamma}.
\]
Let $Y$ be in its stationary regime. Then $q_L:=\frac{\dd \ee L_t}{\dd t}=\frac{\sigma^2}{2}p_Y(0)$.
\end{lemma}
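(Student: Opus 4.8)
The plan is to prove the two identities in turn, obtaining the expression for $q_L$ from the formula for $\ee\eta$ together with a stationary balance equation. For the first identity I would resist the temptation to evaluate the truncated-normal mean via special functions and instead exploit the elementary differential structure of $W$ already linked to $p_Y$ in \eqref{eq:wp}. Differentiating $W(v)=\exp(\frac{2\alpha v}{\sigma^2}-\frac{\gamma v^2}{\sigma^2})$ gives the first-order relation $\frac{\sigma^2}{2}W'(v)=(\alpha-\gamma v)W(v)$. Integrating this over $[0,\infty)$ and using $W(0)=1$ together with $W(v)\to 0$ as $v\to\infty$ yields $-\frac{\sigma^2}{2}=\alpha\int_0^\infty W(v)\,\dd v-\gamma\int_0^\infty vW(v)\,\dd v$.

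By \eqref{eq:wp} one has $\int_0^\infty W(v)\,\dd v=1/p_Y(0)$, since $p_Y=p_Y(0)W$ integrates to one on $[0,\infty)$, and likewise $\int_0^\infty vW(v)\,\dd v=\ee\eta/p_Y(0)$. Substituting these into the integrated relation and multiplying through by $p_Y(0)$ gives $-\frac{\sigma^2}{2}p_Y(0)=\alpha-\gamma\,\ee\eta$, which rearranges at once to the claimed $\ee\eta=\frac{\sigma^2}{2\gamma}p_Y(0)+\frac{\alpha}{\gamma}$.

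For the second identity I would take expectations in the defining equation \eqref{eq:rou} integrated over $[0,t]$. In the stationary regime $\ee Y_t=\ee\eta$ is constant and the stochastic integral contributes $\ee[\sigma W_t]=0$, so the balance relation collapses to $0=(\alpha-\gamma\,\ee\eta)\,t+\ee L_t$, i.e. $\ee L_t=(\gamma\,\ee\eta-\alpha)\,t$. Differentiating in $t$ then gives $q_L=\gamma\,\ee\eta-\alpha$, and inserting the first identity reduces the right-hand side to $\frac{\sigma^2}{2}p_Y(0)$, as asserted.

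The manipulations are short; the only care required is in the justifications. In the first part one checks that the Gaussian factor forces $W(v)\to 0$ at infinity, so the boundary term drops, and that $\ee\eta$ is finite because $p_Y$ is a (truncated) normal density. In the second part the interchange of expectation and time integration rests on a routine Fubini argument, and one should record that $L$ is nondecreasing with $\ee L_t<\infty$, so that $\ee L_t$ is well defined and (being linear in $t$) differentiable. I expect this bookkeeping, rather than any genuine difficulty, to be the only real work.
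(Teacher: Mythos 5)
Your proposal is correct and follows essentially the same route as the paper: the first identity comes from integrating the first-order relation $\frac{\sigma^2}{2}W'(v)=(\alpha-\gamma v)W(v)$ (the paper writes the equivalent identity $p_Y'(y)=-\frac{2\gamma}{\sigma^2}(y-\frac{\alpha}{\gamma})p_Y(y)$ and integrates by the fundamental theorem of calculus, picking up the boundary term $p_Y(0)$), and the second comes from the same stationary balance $0=(\alpha-\gamma\,\ee\eta)+q_L$ obtained by taking expectations in the \SDE. The extra bookkeeping you flag (vanishing of $W$ at infinity, finiteness of $\ee L_t$, Fubini) is sound and merely makes explicit what the paper leaves implicit.
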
 

\begin{proof}
Note the  identity
\[
\frac{\dd p_Y(y)}{\dd y}= -\frac{2\gamma}{\sigma^2}(y-\frac{\alpha}{\gamma})p_Y(y).
\]
Hence,
\begin{align*}
\ee\eta 
& =\int_0^\infty (y-\frac{\alpha}{\gamma})p_Y(y)\,\dd y +\int_0^\infty \frac{\alpha}{\gamma}p_Y(y)\,\dd y \\
& =-\frac{\sigma^2}{2\gamma}\int_0^\infty \frac{\dd p_Y(y)}{\dd y}\,\dd y +\frac{\alpha}{\gamma} \\
& =\frac{\sigma^2}{2\gamma} p_Y(0)+\frac{\alpha}{\gamma}.
\end{align*}

The next step is to determine $q_L$. 
Let $Y$ be in its stationary regime. From the $\SDE$ for $Y$ we get
\[
0=(\alpha-\gamma\, \ee\eta)+q_L.
\]
Using the above expression for $\ee\eta$, we get $
q_L=\gamma\ee\eta-\alpha=\frac{\sigma^2}{2}p_Y(0)$.
\end{proof}


\begin{lemma}\label{lemma:inth}
Let $q=-q_L$ and let $h$ be the function as in Lemma~\ref{lemma:h}. Then, for $x>0$, $h'(x)=\frac{p_Y(0)}{p_Y(x)}\bar F_Y(x)$, where $\bar F_Y(x)=1-F_Y(x)$, with $F_Y$ the distribution function associated to the invariant density $p_Y$. Moreover, $\int_0^\infty h'(x)^2p_Y(x)\,\dd x<\infty$.
\end{lemma}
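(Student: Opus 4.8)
The plan is to prove the two assertions separately: the formula for $h'$ follows by pure substitution, while the integrability requires a genuine tail estimate. First I would make $h'$ explicit. Differentiating the expression for $h$ in Lemma~\ref{lemma:h} gives
\[
h'(x)=\frac{1}{W(x)}\left(1+\frac{2q}{\sigma^2}\int_0^x W(v)\,\dd v\right).
\]
Now I insert the prescribed value $q=-q_L=-\tfrac{\sigma^2}{2}p_Y(0)$ from Lemma~\ref{lemma:eeta}, so that $\tfrac{2q}{\sigma^2}=-p_Y(0)$. Using the identity $W=p_Y/p_Y(0)$ from \eqref{eq:wp}, the term $p_Y(0)\int_0^x W(v)\,\dd v$ collapses to $\int_0^x p_Y(v)\,\dd v=F_Y(x)$, so the bracket equals $1-F_Y(x)=\bar F_Y(x)$. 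Combined with $1/W(x)=p_Y(0)/p_Y(x)$, this yields $h'(x)=\frac{p_Y(0)}{p_Y(x)}\bar F_Y(x)$, as claimed.

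For the integrability claim I would substitute this formula to obtain
\[
\int_0^\infty h'(x)^2 p_Y(x)\,\dd x=p_Y(0)^2\int_0^\infty \frac{\bar F_Y(x)^2}{p_Y(x)}\,\dd x,
\]
and then control the integrand. On any finite interval the integrand is continuous (because $p_Y$ is strictly positive) and hence bounded, so only the behaviour as $x\to\infty$ matters. Here I would exploit that $p_Y$ is the truncated normal density, i.e.\ a normal density with mean $\alpha/\gamma$ and variance $\sigma^2/(2\gamma)$ restricted to $[0,\infty)$ and renormalised, the normalising constant cancelling in the ratio $\bar F_Y/p_Y$. The key analytic input is the standard Mills-ratio bound for Gaussian tails, which for $x>\alpha/\gamma$ gives $\bar F_Y(x)\le \frac{\sigma^2/(2\gamma)}{x-\alpha/\gamma}\,p_Y(x)$. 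Consequently
\[
\frac{\bar F_Y(x)^2}{p_Y(x)}\le \frac{\big(\sigma^2/(2\gamma)\big)^2}{(x-\alpha/\gamma)^2}\,p_Y(x),
\]
and since $p_Y$ is bounded while $(x-\alpha/\gamma)^{-2}$ is integrable at infinity, the tail integral converges. Together with finiteness on compacts this gives the result.

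The only genuinely non-algebraic step, and thus the main point to get right, is the tail comparison between $\bar F_Y$ and $p_Y$: one must ensure $\bar F_Y$ decays fast enough (essentially like $p_Y/x$) so that dividing its \emph{square} by $p_Y$ still leaves an integrable Gaussian-type tail. Note in particular that the crude bound $\bar F_Y\le 1$ is insufficient, since $1/p_Y(x)$ blows up; it is precisely the squared Mills ratio that supplies the extra decay. Everything else reduces to the substitution of $q=-q_L$ and the dictionary $W=p_Y/p_Y(0)$ furnished by \eqref{eq:wp}.
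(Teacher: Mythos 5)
Your proposal is correct and follows essentially the same route as the paper: the same substitution of $q=-q_L=-\tfrac{\sigma^2}{2}p_Y(0)$ together with $W=p_Y/p_Y(0)$ to identify $h'(x)=\tfrac{p_Y(0)}{p_Y(x)}\bar F_Y(x)$, and the same Mills-ratio control of the Gaussian tail (the paper uses the asymptotic $\bar F_Y(x)/p_Y(x)\sim 1/x$, you use the equivalent non-asymptotic inequality) to show $\bar F_Y(x)^2/p_Y(x)$ is integrable at infinity. Your version is, if anything, slightly more explicit about finiteness on compact sets, which the paper leaves implicit.
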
 

\begin{proof}
First we note that 
\[
h(x)=\frac{1}{W(x)}\left(1-
\frac{2q_L}{\sigma^{2}}\int_0^x
W(v)\DD v\right).
\]
Use now $q_L=\frac{\sigma^2}{2}p_Y(0)$ to get $1-
\frac{2q_L}{\sigma^{2}}\int_0^x
W(v)\DD v=1-p_Y(0)\int_0^xW(v)\,\dd v$ and, recall \eqref{eq:wp}, $p_Y(0)W(v)=p_Y(v)$ to arrive at $1-
\frac{2q_L}{\sigma^{2}}\int_0^x
W(v)\DD v=1-\int_0^xp_Y(v)\,\dd v=\bar F_Y(x)$. The first result follows.

To prove the second result, we note that 
$p_Y(x)/\bar F_Y(x)=p_{\ou}(x)/\bar F_{\ou}(x)$.
Recall that for $x\to\infty$ it holds that $\frac{\bar\Phi(x)}{\phi(x)}\sim \frac{1}{x}$ ($\phi$ is the density of $N(0,1)$). Hence, we also have $\frac{\bar F_Y(x)}{p_Y(x)}\sim \frac{1}{x}$. Hence for large $x$ we have
$\int_x^\infty h'(y)^2p_Y(y)\,\dd y <\infty$.
\end{proof}
Here is the first central limit theorem, the counterpart of Proposition 6 in \cite{triple}.
\begin{theorem}\label{thm:clt}
Let $h$ be as in Lemma~\ref{lemma:h} for $q=-q_L$. It holds that $\frac{L_t-q_Lt}{\sqrt{t}}$ weakly converges to $N(0,\tau^2)$, where $\tau^2=\sigma^2\int_0^\infty h'(x)^2p_Y(x)\,\dd x<\infty$.
\end{theorem}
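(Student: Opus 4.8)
The plan is to convert the It\^o expansion \eqref{HH} into an additive-functional representation of $L_t$ and then invoke a martingale central limit theorem. With $h$ chosen as in Lemma~\ref{lemma:h} for $q=-q_L$ one has $(\mathcal{L}h)(Y_t)=-q_L$ and $h'(0)=1$, so \eqref{HH} integrates to
\[
h(Y_t)-h(Y_0)=-q_Lt+\sigma\int_0^t h'(Y_s)\,\DD W_s+L_t.
\]
Rearranging and dividing by $\sqrt t$ produces the central decomposition
\[
\frac{L_t-q_Lt}{\sqrt t}=\frac{h(Y_t)-h(Y_0)}{\sqrt t}-\frac{M_t}{\sqrt t},\qquad M_t:=\sigma\int_0^t h'(Y_s)\,\DD W_s,
\]
so the theorem reduces to showing that the remainder term vanishes in probability while $M_t/\sqrt t$ converges weakly to $N(0,\tau^2)$.

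The remainder term is precisely where obstacle~(1) from Section~\ref{SEC5} must be confronted, and I expect this to be the main difficulty: unlike the bounded process $Z$ of the doubly reflected case, $h(Y_t)$ is unbounded, so the term does not trivially disappear. The key observation is that the growth of $h$ is only logarithmic. Indeed, by Lemma~\ref{lemma:inth} and the Mills-ratio asymptotics used in its proof, $h'(x)=p_Y(0)\bar F_Y(x)/p_Y(x)\sim p_Y(0)/x$ as $x\to\infty$, whence $h(x)=O(\log x)$. Since the reflected $\OU$ process is ergodic with invariant law $p_Y$, the marginal $Y_t$ converges in distribution to the finite, truncated-normal stationary random variable $\xi$; by continuity of $h$ the family $\{h(Y_t)\}$ converges in distribution to $h(\xi)$ and is therefore tight. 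A tight family divided by $\sqrt t$ tends to $0$ in probability, and $h(Y_0)/\sqrt t\to0$ deterministically, so the first term on the right vanishes in probability.

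For the martingale term, $M$ is a continuous local martingale with quadratic variation $\langle M\rangle_t=\sigma^2\int_0^t h'(Y_s)^2\,\dd s$. Because $\int_0^\infty h'(x)^2p_Y(x)\,\dd x<\infty$ by the second assertion of Lemma~\ref{lemma:inth}, the ergodic theorem for additive functionals of $Y$ yields $\langle M\rangle_t/t\to\tau^2$ almost surely, with $\tau^2=\sigma^2\int_0^\infty h'(x)^2p_Y(x)\,\dd x$. The continuous-martingale central limit theorem (via the Dambis--Dubins--Schwarz time change, as exploited in \cite{MR2771195}) then gives $M_t/\sqrt t\Rightarrow N(0,\tau^2)$. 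Combining the two contributions by Slutsky's theorem, and using the symmetry of the normal law to absorb the sign of $M_t$, yields $\frac{L_t-q_Lt}{\sqrt t}\Rightarrow N(0,\tau^2)$, as claimed; the finiteness $\tau^2<\infty$ is inherited directly from Lemma~\ref{lemma:inth}.
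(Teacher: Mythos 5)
Your proposal is correct and follows essentially the same route as the paper: the same It\^o decomposition $\frac{L_t-q_Lt}{\sqrt t}=\frac{h(Y_t)-h(Y_0)}{\sqrt t}-\frac{M_t}{\sqrt t}$, the same tightness argument (via $Y_t\Rightarrow\eta$ and continuity of $h$) to kill the remainder, and the same ergodic-theorem-plus-martingale-CLT treatment of $M_t/\sqrt t$. The only cosmetic difference is that you additionally invoke the logarithmic growth of $h$, which the paper reserves for the functional version (Theorem~\ref{thm:fclt}) and does not need here.
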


\begin{proof}
Let $h$ be as in Lemma~\ref{lemma:h} for $q=-q_L$, and consider $h(Y_t)$. It\^o's rule gives
\[
h(Y_t)-h(Y_0)= \int_0^t\mathcal{L}h(Y_s)\,\dd s+\sigma\int_0^th'(Y_s)\,\dd W_s+\int_0^th'(Y_s)\,\dd L_s.
\]
Property \eqref{eq:g} of $L$ together with $h'(0)=1$ give $\int_0^th'(Y_s)\,\dd L_s=L_t$. Combine this with $\mathcal{L}h(Y_s)=-q_L$ to arrive at
\[
h(Y_t)-h(Y_0)= -q_Lt+\sigma\int_0^th'(Y_s)\,\dd W_s+L_t.
\]
Hence,
\[
\frac{L_t-q_Lt}{\sqrt{t}}=\frac{h(Y_t)-h(Y_0)}{\sqrt{t}}-\frac{1}{\sqrt{t}}\sigma\int_0^th'(Y_s)\,\dd W_s.
\]
As $Y_t\to \eta$ in distribution, where $\eta$ is distributed according to the invariant distribution of $Y$, we also have by the continuous mapping theorem, $h(Y_t)\to h(\eta)$ in distribution, and hence $\frac{h(Y_t)-h(Y_0)}{\sqrt{t}}\stackrel{\pp}{\to} 0$. By the ergodic theorem~\cite[p.\ 134]{MR0346904},
\[
\frac{1}{t}\langle \sigma\int_0^\cdot h'(Y_s)\,\dd W_s\rangle_t\stackrel{\pp}{\to}\sigma^2\int_0^\infty h'(x)^2p_Y(x)\,\dd x=\tau^2,
\]
where the right hand side is finite according to Lemma~\ref{lemma:inth}. Hence $\frac{1}{\sqrt{t}}\sigma\int_0^th'(Y_s)\,\dd W_s\to N(0,\tau^2)$ in distribution. 
\end{proof}

With a bit more effort, we obtain a functional version of the above theorem.

\begin{theorem}\label{thm:fclt}
The centered and scaled loss process $L^n$ defined by 
$L_{t}^{n}:=\frac{L_{nt}-q_Lnt}{\tau\sqrt{n}}$ converges 
weakly in  $C[0,\infty)$ with the locally uniform metric  to a standard Brownian motion as $n\rightarrow \infty$.
\end{theorem}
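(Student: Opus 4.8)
The plan is to lift the argument of Theorem~\ref{thm:clt} from a single time to the process level and then invoke a functional central limit theorem for continuous martingales. Starting from the It\^o decomposition established there, namely
\[
L_t-q_Lt = h(Y_t)-h(Y_0)-\sigma\int_0^t h'(Y_s)\,\dd W_s,
\]
I would evaluate it at time $nt$ and divide by $\tau\sqrt n$, so that, writing $M_t:=\sigma\int_0^t h'(Y_s)\,\dd W_s$,
\[
L^n_t=\frac{h(Y_{nt})-h(Y_0)}{\tau\sqrt n}-\frac{M_{nt}}{\tau\sqrt n}.
\]
Set $M^n_t:=M_{nt}/(\tau\sqrt n)$ for the rescaled (continuous) martingale term and $R^n_t:=(h(Y_{nt})-h(Y_0))/(\tau\sqrt n)$ for the remainder. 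The assertion will follow once I show that $M^n$ converges weakly in $C[0,\infty)$ to a standard Brownian motion and that $R^n$ is asymptotically negligible uniformly on compacts.

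For the martingale term, $M^n$ is a continuous local martingale with $M^n_0=0$, and a time change in the quadratic variation gives
\[
\langle M^n\rangle_t=\frac{1}{\tau^2 n}\,\sigma^2\int_0^{nt} h'(Y_s)^2\,\dd s=\frac{t}{\tau^2}\cdot\frac{1}{nt}\,\sigma^2\int_0^{nt} h'(Y_s)^2\,\dd s.
\]
Exactly as in the proof of Theorem~\ref{thm:clt}, the ergodic theorem yields $\frac{1}{nt}\sigma^2\int_0^{nt}h'(Y_s)^2\,\dd s\stackrel{\pp}{\to}\tau^2$, whence $\langle M^n\rangle_t\stackrel{\pp}{\to}t$ for each $t$. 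Since the $M^n$ are continuous, the functional martingale central limit theorem (e.g.\ the continuous version of Rebolledo's theorem; no jump or Lindeberg condition is needed) gives $M^n\Rightarrow B$ for a standard Brownian motion $B$, and as $-B$ is again standard Brownian motion, also $-M^n\Rightarrow B$.

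The delicate point is the remainder $R^n$: unlike in Theorem~\ref{thm:clt} I must control $\sup_{t\le T}|R^n_t|$ rather than $R^n_t$ at a fixed $t$. Here I would use that $h$ is nonnegative and increasing on $[0,\infty)$ (as $h'>0$), so that, by continuity of $Y$,
\[
\sup_{t\le T}h(Y_{nt})=h\Big(\sup_{s\le nT}Y_s\Big),
\]
reducing matters to the running maximum of $Y$. Since $Y$ is mean-reverting with a truncated-Gaussian stationary law, $\sup_{s\le nT}Y_s$ grows only like $\sqrt{\log n}$, whereas from $h'(x)\sim p_Y(0)\bar F_Y(x)/p_Y(x)=O(1/x)$ (Lemma~\ref{lemma:inth}) one has $h(x)=O(\log x)$, so $h(\sup_{s\le nT}Y_s)=O(\log\log n)$ with high probability and, together with the fixed constant $h(Y_0)/\sqrt n\to 0$, this forces $\sup_{t\le T}|R^n_t|\stackrel{\pp}{\to}0$. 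Concretely I would bound, for fixed $\varepsilon>0$ and a suitable $c>0$, all large $n$,
\[
\pp\Big(\sup_{t\le T}|R^n_t|>\varepsilon\Big)\le\pp\Big(\sup_{s\le nT}Y_s> h^{-1}(c\varepsilon\sqrt n)\Big),
\]
and, since $h^{-1}(c\varepsilon\sqrt n)$ is of order $e^{c'\sqrt n}$, estimate the right-hand side via a union bound over a fine discretization of $[0,nT]$ combined with the Gaussian tail of the stationary marginal and a modulus-of-continuity control on the increments of $Y$, making the probability vanish. This maximal estimate is the main obstacle and the only genuinely new ingredient beyond Theorem~\ref{thm:clt}.

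Finally I would assemble the pieces: writing $L^n=R^n+(-M^n)$ with $R^n\stackrel{\pp}{\to}0$ locally uniformly and $-M^n\Rightarrow B$, a Slutsky-type argument for weak convergence in $C[0,\infty)$ (addition being continuous when one summand has a deterministic limit, here zero) yields $L^n\Rightarrow B$, which is the claimed functional central limit theorem.
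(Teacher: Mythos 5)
Your overall architecture coincides with the paper's: the same decomposition $L_{nt}-q_Lnt=h(Y_{nt})-h(Y_0)-\sigma\int_0^{nt}h'(Y_s)\,\dd W_s$, the same identification of the bracket limit $\langle M^n\rangle_t\to\tau^2t$ via the ergodic theorem, the same appeal to a functional CLT for continuous (semi)martingales, and the same reduction of the remainder to a uniform-in-$t$ estimate using $h'(x)=O(1/x)$, hence $h(x)=O(\log x)$. The only place where you diverge is the one step that is genuinely new relative to Theorem~\ref{thm:clt}, namely the maximal estimate on $h(Y_{nt})$, and that is exactly where your argument is left as a programme rather than a proof. You reduce to controlling $\sup_{s\le nT}Y_s$ (correctly, since $h$ is increasing), assert that it grows like $\sqrt{\log n}$, and propose a discretization/union-bound argument using ``the Gaussian tail of the stationary marginal'' plus a modulus-of-continuity control. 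As stated this has two problems: the process is not assumed stationary (it starts at $Y_0=x$), so you would first need a tail bound on the time-$s$ marginals of the \emph{reflected} process that is uniform in $s$, which you do not supply; and the modulus-of-continuity step for $Y$ (whose increments involve $L$) is nontrivial and also not supplied. So the key quantitative input is missing.

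The paper closes this gap with a much softer device that you could adopt verbatim: since $Y_t\ge 0$ and $\gamma>0$, the drift satisfies $\alpha-\gamma Y_t\le\alpha$, whence the pathwise bound $Y_t\le Y_0+\alpha t+\sigma W_t+L_t$; Doob's inequality and $\ee L_{nT}=O(n)$ then give $\ee\sup_{t\le T}Y_{nt}=O(n)$, and Jensen (with $\log$ and $\sup$ commuting for the increasing function $x\mapsto\log(x+1)$) yields $\ee\sup_{t\le T}\log(Y_{nt}+1)=O(\log n)=o(\sqrt n)$. Note that this first-moment bound also makes your own tail estimate immediate: by Markov's inequality, $\pp\bigl(\sup_{s\le nT}Y_s>h^{-1}(c\varepsilon\sqrt n)\bigr)\le Cn\,e^{-c'\sqrt n}\to 0$, so no discretization, Gaussian tail, or continuity-modulus argument is needed at all. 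In short: your plan is structurally identical to the paper's and would work once the running maximum of $Y$ is actually controlled, but as written the only nontrivial new ingredient is asserted rather than proved, and the elementary comparison $Y_t\le Y_0+\alpha t+\sigma W_t+L_t$ is the missing piece.
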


\begin{proof}  We have, as in the proof of Theorem~\ref{thm:clt},
\begin{equation*}
\frac{L_{nt}-q_Lnt}{\sqrt{n}}=\frac{h(Y_{nt})-h(Y_{0})}{\sqrt{n}}-\frac{\sigma}{\sqrt{n}} \int_{0}^{nt}h'(Y_{s})\mathrm{d}W_{s}.
\end{equation*}
By the ergodic theorem \cite[p.\ 134]{MR0346904}, for arbitrary $t\in[0, \infty)$,
\begin{equation*} 
\langle \frac{\sigma}{\sqrt{n}} \int_{0}^{n\cdot}h'(Y_{s})\mathrm{d}W_{s} \rangle_{t}=\frac{\sigma^{2}t}{nt} \int_{0}^{nt}h'(Y_{s})^{2}\DD s 
\stackrel{\mathbb{P}}{\rightarrow} \tau^2t,\:\:\text{as } n\rightarrow \infty,
\end{equation*} 
and hence, by the martingale central limit theorem, we have weak convergence of the martingales $\frac{\sigma}{\sqrt{n}} \int_{0}^{n\cdot}h'(Y_{s})\mathrm{d}W_{s}$ to a Brownian motion.
The claim will be proved by applying the functional limit theorem for semimartingales \cite[Thm. 3] {MR651472} to $L^{n}$, for which it is now sufficient to show that for every $T>0$
\[\sup_{t\leq T}\frac{h(Y_{0})-h(Y_{nt})}{\sqrt{n}}\rightarrow 0 \: \:\: \text{in probability for } n\rightarrow \infty.\]
We have seen in the proof of Lemma~\ref{lemma:inth} that $h'(x)$ behaves as  $\frac{1}{x}$ for large values of $x$. Hence $h(x)$ for large $x$ behaves as  $\log x$, and therefore it is sufficient to show that 
\begin{equation}\label{eq:sup}
\sup_{t\leq T}\frac{\log (Y_{nt}+1)}{\sqrt{n}}\rightarrow 0 \:\:\: \text{in probability for } n\rightarrow \infty.
\end{equation}
We use the following trivial estimate. Since $Y_t\geq 0$, we have
$Y_t\leq Y_0+\alpha t +\sigma W_t + L_t$. Hence, denoting $W^*_t=\sup_{s\leq t}|W_s|$ and recalling Doob's inequality, $\ee(W^*_t)^2\leq 4t$, we have $\sup_{t\leq T} Y_{nt}\leq \alpha nT+\sigma W^*_{nT}+L_{nT}$ and $\ee \sup_{t\leq T} Y_{nt}\leq \alpha nT+\sqrt{\ee(\sigma W^*_{nT})^2}+\ee L_{nT}=\alpha_nT+ \sqrt{\sigma^24nT}+\ee L_{nT}$. Hence $\limsup_{n\to\infty}\frac{1}{n}\ee\sup_{t\leq T} Y_{nt}\leq \alpha T+q_LT$. But then, by Jensen's inequality, 
\[
\limsup_{n\to \infty}\frac{1}{n}\ee\sup_{t\leq T}\log (Y_{nt}+1)\leq \log \limsup_{n\to \infty}\frac{1}{n}\ee\sup_{t\leq T} (Y_{nt}+1)<\infty. 
\]
We conclude that  $\ee\sup_{t\leq T}\frac{\log (Y_{nt}+1)}{\sqrt{n}}\to 0$, as $n\to\infty$, which is sufficient for \eqref{eq:sup} to hold, as the supremum is trivially nonnegative.
\end{proof}

\subsection{Other reflecting boundaries}

In this section we study processes that are (lower or upper) reflected at other boundaries. As the results immediately follow from Theorem~\ref{thm:fclt} or can be proven in a similar fashion, we state them without proofs.

First we consider a process reflected at another lower boundary than zero, which we reduce by translation to the previous case. Let $\ell$ be this boundary 
and consider the one-sided lower reflected process $Y^\ell$ that is such that $Y^\ell_t\geq \ell$, and given by the $\SDE$
\[
\dd Y^\ell_t=(\alpha-\gamma Y^\ell_t)\,\dd t +\sigma\dd W_t+\dd L^\ell_t,
\]
where $L^\ell$ is the minimal increasing process that renders $Y^\ell_t\geq \ell$ for all $t\geq 0$. Put $Y_t:=Y^\ell_t-\ell$ to find
\[
\dd  Y_t=(\alpha^\ell-\gamma  Y_t)\,\dd t+\sigma\dd W_t+\dd L^\ell_t,
\]
with $\alpha^\ell=\alpha-\gamma \ell$. Note that $\int_0^\infty \one_{
\{Y_t>0\}}\,\dd L^\ell_t=0$. It follows that one can obtain a central limit theorem for $\L^\ell$ from the result in the previous section. We need that the stationary density of $Y^\ell$ is (at $\ell$) truncated normal. For $y>\ell$ one has
\[
p_{Y^\ell}(y)=\frac{1}{\Phi(\frac{\alpha-\gamma \ell}
{\sqrt{\sigma^2\gamma/2}})}
\frac{1}{\sqrt{\pi\sigma^2/\gamma}}\exp(-\frac{\gamma}{\sigma^2}(y-\frac{\alpha}{\gamma})^2).
\]
We also need $q_{\ell}=\frac{\sigma^2}{2}p_{Y^\ell}(\ell)$ and the function $h_\ell$, which is for $y>\ell$ given by $h_\ell(y)=\int_\ell^yh_\ell'(x)\,\dd x$, with $h_\ell'(x)=\frac{p_{Y^\ell}(\ell)}{p_{Y^\ell}(x)}\bar F_{Y^\ell}(x)$, and $\tau^2_\ell=\sigma^2\int_\ell^\infty (h_\ell'(x))^2p_{Y^\ell}(x)\,\dd x$. Note that $\mathcal{L}h(x)=-q_\ell$.
The precise result is as follows.

\begin{proposition}\label{prop:fcltl}
The centered and scaled loss process $L^{\ell,n}$ defined by 
$L_{t}^{\ell,n}:=\frac{L^\ell_{nt}-q_{\ell}nt}{\tau_\ell\sqrt{n}}$ converges 
weakly in  $C[0,\infty)$ with the locally uniform metric  to a Brownian motion as $n\rightarrow \infty$.
\end{proposition}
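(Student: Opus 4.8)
The plan is to reduce the statement entirely to Theorem~\ref{thm:fclt} by means of the translation already introduced above. Setting $Y_t := Y^\ell_t-\ell$ turns $Y^\ell$ into the process $\dd Y_t=(\alpha^\ell-\gamma Y_t)\,\dd t+\sigma\,\dd W_t+\dd L^\ell_t$ with $\alpha^\ell=\alpha-\gamma\ell$, and $Y_t\geq 0$ exactly when $Y^\ell_t\geq \ell$. Since a constant shift of the state leaves increments unchanged, the minimal increasing process $L^\ell$ keeping $Y^\ell_t\geq \ell$ coincides with the minimal increasing process keeping $Y_t\geq 0$; thus $L^\ell$ is precisely the idleness process of the zero-reflected process $Y$, and $\int_0^\infty\one_{\{Y_t>0\}}\,\dd L^\ell_t=0$ holds as noted. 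Consequently $Y$ is a one-sided lower-reflected (at zero) OU process with the same $\gamma,\sigma$ and drift parameter $\alpha^\ell\in\rr$, so Theorem~\ref{thm:fclt} applies to it verbatim.

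First I would record how the ingredients of Theorem~\ref{thm:fclt} transform under the shift. Writing $W^\ell(v)=\exp(\frac{2\alpha^\ell v}{\sigma^2}-\frac{\gamma v^2}{\sigma^2})$, the invariant density of $Y$ is $p_Y(y)=W^\ell(y)/\int_0^\infty W^\ell(u)\,\dd u$, which equals the shifted truncated normal $p_{Y^\ell}(\cdot+\ell)$; hence $p_Y(0)=p_{Y^\ell}(\ell)$ and, by Lemma~\ref{lemma:eeta}, the idleness rate is $q_L=\frac{\sigma^2}{2}p_Y(0)=\frac{\sigma^2}{2}p_{Y^\ell}(\ell)=q_\ell$. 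Likewise the function $h$ of Lemma~\ref{lemma:h} for this process with $q=-q_L$ is related to $h_\ell$ by $h_\ell(y)=h(y-\ell)$, so that $h_\ell'(x)=h'(x-\ell)=\frac{p_Y(0)}{p_Y(x-\ell)}\bar F_Y(x-\ell)=\frac{p_{Y^\ell}(\ell)}{p_{Y^\ell}(x)}\bar F_{Y^\ell}(x)$ by Lemma~\ref{lemma:inth}; a change of variables then gives $\sigma^2\int_0^\infty h'(x)^2p_Y(x)\,\dd x=\sigma^2\int_\ell^\infty (h_\ell'(x))^2p_{Y^\ell}(x)\,\dd x=\tau_\ell^2$, whose finiteness is inherited from Lemma~\ref{lemma:inth} applied to $Y$.

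Having matched the constants, I would simply invoke Theorem~\ref{thm:fclt} for $Y$: the process $\frac{L_{nt}-q_Lnt}{\tau\sqrt{n}}$ converges weakly in $C[0,\infty)$ to a standard Brownian motion. Since $L^\ell=L$ pathwise, $q_L=q_\ell$ and $\tau=\tau_\ell$, this is exactly the asserted convergence of $L^{\ell,n}$. I do not expect a genuine obstacle here: the entire content is the translation invariance of the Skorokhod reflection together with the bookkeeping that identifies $(q_\ell,h_\ell,\tau_\ell^2)$ with the shifted $(q_L,h,\tau^2)$. The one point meriting care, and the closest thing to an obstacle, is that Theorem~\ref{thm:fclt} must be applicable for a drift parameter of arbitrary sign; this is fine because the OU specification in the introduction permits $\alpha\in\rr$, and the two analytic inputs to that theorem---the tail relation $\bar F_Y(x)/p_Y(x)\sim 1/x$ behind the finiteness of $\tau^2$ and the ensuing logarithmic growth of $h$ used in \eqref{eq:sup}---depend only on $\gamma,\sigma>0$, not on the sign of $\alpha^\ell=\alpha-\gamma\ell$.
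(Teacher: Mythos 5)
Your reduction by translation is exactly the route the paper takes: the text preceding Proposition~\ref{prop:fcltl} performs the same shift $Y_t=Y^\ell_t-\ell$, identifies $\alpha^\ell=\alpha-\gamma\ell$, and matches $q_\ell$, $h_\ell$, $\tau_\ell^2$ with the shifted quantities before invoking Theorem~\ref{thm:fclt}, which is why the paper states the result without proof. Your bookkeeping and the remark that the argument is insensitive to the sign of the drift parameter are correct and complete the sketch faithfully.
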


Next we turn to upper reflected processes.
Let $d\in\rr$ 
and consider the one-sided upper reflected process $Z$ that is such that $Z_t\leq d$, and given by the $\SDE$
\[
\dd Z_t=(\alpha-\gamma Z_t)\,\dd t +\sigma\dd W_t-\dd U_t,
\]
where $U$ is the minimal increasing process that renders $Z_t\leq d$ for all $t\geq 0$. Note that $\int_0^\infty\one_{\{Z_t< d\}}\,\dd U_t=0$. By `flipping' we can reduce this case to the one with reflection at a lower boundary.
Let $\tilde Y_t:=d-Z_t$, then we find
\[
\dd \tilde Y_t=(\tilde\alpha-\gamma \tilde Y_t)\,\dd t-\sigma\dd W_t+\dd U_t,
\]
with $\tilde\alpha=-\alpha+\gamma d$. It follows that one can obtain a central limit theorem for $U$ from the results in the previous section. Almost all that is needed is to express all quantities needed in terms of $\tilde\alpha=-\alpha+\gamma d$ instead of in $\alpha$. For instance, the invariant density $p_Z$ of $Z$ can be derived from the invariant density of $\tilde Y$. It is at zero truncated $N(\frac{\tilde\alpha}{\gamma},\frac{\sigma^2}{2\gamma})$ and one has (for $z<d$) $p_Z(z)=p_{\tilde Y}(d-z)$, explicitly,
\[
p_Z(z)=\frac{1}{\Phi(\frac{\gamma d-\alpha)}{\sigma^2\gamma/2})}\frac{1}{\sqrt{\frac{\pi\sigma^2}{2\gamma}}}\exp(-\frac{\gamma}{\sigma^2}(z-\frac{\alpha}{\gamma})^2).
\]
We also need (for $z\leq d$) $h_Z(z)=-\int_z^d h_Z'(u)\,\dd u$, where  $h_Z'(z):=\frac{p_Z(d)}{p_Z(z)}F_Z(z)$, with $F_Z(z)=\int_{-\infty}^zp_Z(u)\,\dd u$. Note that $h_Z(d)=0$, $h_Z'(d)=1$ and $\mathcal{L}h_Z(z)=q_U$, where $q_U=\frac{\sigma^2}{2}p_Z(d)$.

The precise result is as follows. 

\begin{proposition}
Let $q_U=\frac{\sigma^2}{2}p_Z(d)$
and $\tau_U^2=\sigma^2\int_{-\infty}^dh_Z'(z)^2p_Z(z)\,\dd z$. For $n\to\infty$ we have weak convergence of the scaled and centered process $U^n$ defined by $U^n_t=(n\tau_U^2)^{-1/2}(U_{nt}-q_Unt)$ to a standard Brownian motion.
\end{proposition}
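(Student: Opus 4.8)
The plan is to reduce the statement to Theorem~\ref{thm:fclt} by means of the flipping transformation already introduced, i.e.\ by passing to $\tilde Y_t := d-Z_t$ and recognising $U$ as the idleness process of a \emph{lower} reflected $\OU$ process.

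First I would verify that $\tilde Y$ is genuinely a one-sided lower reflected $\OU$ process of the type \eqref{eq:rou}. Since $Z_t\leq d$ we have $\tilde Y_t\geq 0$ (and $\tilde Y_0=d-Z_0\geq 0$), while substituting $Z_t=d-\tilde Y_t$ into the $\SDE$ for $Z$ gives
\[
\dd \tilde Y_t=(\tilde\alpha-\gamma\tilde Y_t)\,\dd t+\sigma\,\dd\tilde W_t+\dd U_t,\qquad \tilde\alpha=\gamma d-\alpha,
\]
where $\tilde W_t:=-W_t$ is again a standard Brownian motion. The Skorokhod constraint $\int_0^\infty\one_{\{Z_t<d\}}\,\dd U_t=0$ translates, through $\{Z_t<d\}=\{\tilde Y_t>0\}$, into $\int_0^\infty\one_{\{\tilde Y_t>0\}}\,\dd U_t=0$, and $U$ is precisely the minimal nondecreasing process keeping $\tilde Y$ nonnegative. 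Thus $\tilde Y$ is exactly the process of \eqref{eq:rou} for the parameter triple $(\tilde\alpha,\gamma,\sigma)$ with driving noise $\tilde W$, and its reflection regulator $U$ plays the role of the idleness process $L$.

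Next I would simply invoke Theorem~\ref{thm:fclt} applied to $\tilde Y$. As that theorem holds for every $\alpha\in\rr$ and $\gamma,\sigma>0$, it is available here and yields that $(U_{nt}-q_Unt)/(\tau_U\sqrt n)$ converges weakly in $C[0,\infty)$ to a standard Brownian motion, with $q_U=\tfrac{\sigma^2}{2}p_{\tilde Y}(0)$ by Lemma~\ref{lemma:eeta} and $\tau_U^2=\sigma^2\int_0^\infty h_{\tilde Y}'(y)^2p_{\tilde Y}(y)\,\dd y$, where $h_{\tilde Y}$ is the function of Lemma~\ref{lemma:h} for $(\tilde\alpha, q=-q_U)$. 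Finiteness of $\tau_U^2$ is inherited from Lemma~\ref{lemma:inth}, whose tail estimate $\bar F/p\sim 1/y$ for the truncated-normal invariant law is valid irrespective of the sign of the mean $\tilde\alpha/\gamma$.

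It then remains to rewrite the constants in the original coordinates, which is pure bookkeeping. From $p_Z(z)=p_{\tilde Y}(d-z)$ one reads off $p_{\tilde Y}(0)=p_Z(d)$, so $q_U=\tfrac{\sigma^2}{2}p_Z(d)$ as claimed; setting $h_Z(z):=-h_{\tilde Y}(d-z)$ one checks $h_Z(d)=0$, $h_Z'(d)=1$, $\mathcal{L}h_Z=q_U$, and $h_Z'(z)=h_{\tilde Y}'(d-z)=\tfrac{p_Z(d)}{p_Z(z)}F_Z(z)$, so the substitution $y=d-z$ turns $\int_0^\infty h_{\tilde Y}'(y)^2p_{\tilde Y}(y)\,\dd y$ into $\int_{-\infty}^d h_Z'(z)^2p_Z(z)\,\dd z$, matching the stated $\tau_U^2$. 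I expect no genuine obstacle beyond careful sign-tracking in this flip: the only point deserving attention is confirming that $\tilde W=-W$ is a standard Brownian motion (so that $\tilde Y$ fits the template \eqref{eq:rou} exactly) and that the direction of reflection is preserved, after which the entire apparatus of Section~\ref{new}, including the functional limit theorem for semimartingales and the $\sup_{t\le T}\log(\tilde Y_{nt}+1)/\sqrt n\to 0$ tightness estimate, transfers verbatim.
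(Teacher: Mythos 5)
Your proposal is correct and is exactly the reduction the paper has in mind: the paper states this proposition without proof, remarking only that it follows from Theorem~\ref{thm:fclt} via the flip $\tilde Y_t=d-Z_t$ and the dictionary $p_Z(z)=p_{\tilde Y}(d-z)$, $h_Z'(z)=\frac{p_Z(d)}{p_Z(z)}F_Z(z)$, $q_U=\frac{\sigma^2}{2}p_Z(d)$. Your writeup supplies precisely the bookkeeping the paper leaves implicit (including the observation that $\tilde W=-W$ is again a standard Brownian motion and that the sign of $\tilde\alpha$ is immaterial for the tail estimate), so there is nothing to correct.
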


{\small
\bibliographystyle{plain}

\begin{thebibliography}{10}






\bibitem{MR0346904}
{\u{I}}.~{\=I}. G{\={\i}}hman and A.~V. Skorohod.
\newblock {\em Stochastic differential equations}.
\newblock Springer-Verlag, New York, 1972.
\newblock Translated from the Russian by Kenneth Wickwire, Ergebnisse der
  Mathematik und ihrer Grenzgebiete, Band 72.

\bibitem{MR2944002}
V.~Giorno, A.~G. Nobile, and R.~di~Cesare.
\newblock On the reflected {O}rnstein-{U}hlenbeck process with catastrophes.
\newblock {\em Appl. Math. Comput.}, 218(23): 11570--11582, 2012.


\bibitem{triple}
G.~Huang, M.~Mandjes and P.~Spreij. Limit theorems for reflected Ornstein-Uhlenbeck processes. 
\newblock {\em Statistica Neerlandica} 68(1):25--42, 2014.

%

\bibitem{LionsSnzitman}
P. Lions and A. Sznitman. Stochastic differential equations with reflecting boundary conditions. \emph{Comm. Pure Appl. Math.} 37:511--537, 1984.


\bibitem{MR651472}
A. N. Shiryayev.
\newblock Martingales: recent developments, results and applications.
\newblock {\em Internat. Statist. Rev.}, 49(3):199--233, 1981.


\bibitem{Srikant:1996:SRL:229493.229496}
Rayadurgam Srikant and Ward Whitt.
\newblock Simulation run lengths to estimate blocking probabilities.
\newblock {\em ACM Trans. Model. Comput. Simul.}, 6(1):7--52, January 1996.


\bibitem{MR1957808}
Amy~R. Ward and Peter~W. Glynn.
\newblock A diffusion approximation for a {M}arkovian queue with reneging.
\newblock {\em Queueing Syst.}, 43(1-2):103--128, 2003.

\bibitem{MR1993278}
Amy~R. Ward and Peter~W. Glynn.
\newblock Properties of the reflected {O}rnstein-{U}hlenbeck process.
\newblock {\em Queueing Syst.}, 44(2):109--123, 2003.

\bibitem{MR2172907}
Amy~R. Ward and Peter~W. Glynn.
\newblock A diffusion approximation for a {$GI/GI/1$} queue with balking or
  reneging.
\newblock {\em Queueing Syst.}, 50(4):371--400, 2005.

\bibitem{MR2771195}
Xiaowei Zhang and Peter~W. Glynn.
\newblock On the dynamics of a finite buffer queue conditioned on the amount of
  loss.
\newblock {\em Queueing Syst.}, 67(2):91--110, 2011.

\end{thebibliography}

}

\end{document}